\topskip \setlength{\parindent}{0pt} \setlength{\parskip}{5pt plus
\theoremstyle{remark}
\theoremstyle{plain}
\newtheorem{theorem}{Theorem}[section]
\newtheorem{definition}[theorem]{Definition}
\begin{document}
\title{Generalized $q$-Calkin-Wilf trees and $c$-hyper $m$-expansions of integers}
\date{\small }
\maketitle
\begin{center}Toufik Mansour\\
Department of Mathematics, University of Haifa, 3498838  Haifa, Israel

{\tt tmansour@univ.haifa.ac.il}
\begin{center}Mark Shattuck\\
Department of Mathematics, University of Tennessee, Knoxville, TN 37919

{\tt shattuck@math.utk.edu}
\end{center}

\end{center}

\begin{abstract}
A hyperbinary expansion of a positive integer $n$ is a partition of $n$ into powers of $2$ in which each part appears at most twice.  In this paper, we consider a generalization of this concept and a certain statistic on the corresponding set of expansions of $n$.  We then define $q$-generalized $m$-ary trees whose vertices are labeled by ratios of two consecutive terms within the sequence of distribution polynomials for the aforementioned statistic.  When $m=2$, we obtain a variant of a previously considered $q$-Calkin-Wilf tree.
\end{abstract}

\noindent{\em Keywords:} Calkin-Wilf tree, hyperbinary expansion, $m$-ary tree, $q$-analogue

\noindent 2010 {\em Mathematics Subject Classification:} 11B75, 11B83, 11B37, 05A30

\section{Introduction}

The \emph{Calkin-Wilf tree} (see, e.g., \cite{B,CW}) is a binary tree having root $\frac{1}{1}$ in which a vertex labeled $\frac{a}{b}$ has two children, namely, $\frac{a}{a+b}$ (the left child)
and $\frac{a+b}{b}$ (the right one).  See Figure 1 below.
\begin{figure}[htp]
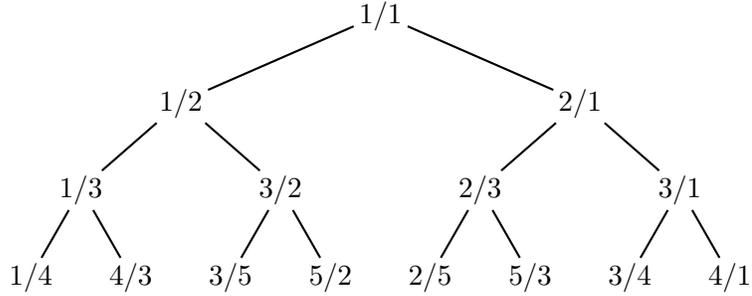

\begin{center}
\par
\pstree[nodesep=2pt,levelsep=7ex]{\TR{1/1} }{
        \pstree{
            \TR{1/2} }
                {
                \pstree{\TR{1/3}}
                    {
                    \TR{1/4}
                    \TR{4/3}
                    }
                \pstree{\TR{3/2}}
                    {
                    \TR{3/5}
                    \TR{5/2}
                    }
                }
        \pstree{\TR{2/1}}
              {
              \pstree{\TR{2/3}}
                    {
                    \TR{2/5}
                    \TR{5/3}
                    }
              \pstree{\TR{3/1}}
                    {
                    \TR{3/4}
                    \TR{4/1}
                    }
              }
        }
\end{center}
\caption{The first four levels of the Calkin-Wilf tree.}
\end{figure}
Calkin and Wilf \cite{CW} have shown that each positive
rational number appears exactly once in this tree, as a fraction in lowest terms. The
Calkin-Wilf sequence is obtained by reading the tree line-by-line from left to right. It starts with
\begin{equation*}
\frac{1}{1},\text{ }\frac{1}{2},\text{ }\frac{2}{1},\text{ }\frac{1}{3},%
\text{ }\frac{3}{2},\text{ }\frac{2}{3},\text{ }\frac{3}{1},\text{ }\frac{1}{%
4},\text{ }\frac{4}{3},\text{ }\frac{3}{5},\text{ }\frac{5}{2},\text{ }\frac{%
2}{5},\text{ }\frac{5}{3},\text{ }\frac{3}{4},\text{ }\frac{4}{1},\ldots,
\end{equation*}
and it was found by Newman (see Knuth \cite{New}) that this sequence satisfies the somewhat unusual recurrence
\begin{equation}\label{Newman}
x_{n+1}=\frac{1}{2\left\lfloor x_{n}\right\rfloor +1-x_{n}}, \qquad n \geq 1,
\end{equation}
with initial condition $x_1=1$.  This sequence was investigated as early as 1858 by Stern \cite{Ste} (see also Reznick \cite{Rez} and the references contained therein).
Here, we will consider some related generalized trees that extend a certain aspect of the preceding sequence.

The diatomic sequence $b_n$ is obtained by listing in order the numerators of the terms of the Calkin-Wilf sequence and starts with
\begin{equation*}
1,\text{ }1,\text{ }2,\text{ }1,\text{ }3,\text{ }2,\text{ }3,\text{ }1,\text{ }4,\text{ }3,\text{ }5,\text{ }2,\text{ }5,\text{ }3,\text{ }4,\ldots.
\end{equation*}
It is defined recursively by
$$\quad b_{2n}=b_n,\quad b_{2n+1}=b_n+b_{n+1},\quad n\geq1,$$
with $b_1=1$, and has been an object of recent study (see, for example, \cite{BM,S,U} and the references contained therein).
Various polynomial generalizations \cite{DS,KMP} of the sequence $b_n$ have been considered. For example, Klav$\rm\breve{z}$ar
et al. \cite{KMP} defined the polynomials
\begin{align}\label{eq1}
B_{2n}(t)=tB_n(t),\quad B_{2n+1}(t)=B_n(t)+B_{n+1}(t), \qquad n \geq 1,
\end{align}
with $B_0(t)=0$ and $B_1(t)=1$, and
Dilcher and Stolarsky \cite{DS} defined
\begin{align}\label{eq2}
F_{2n}(q)=F_n(q),\quad F_{2n+1}(q)=qF_n(q)+F_{n+1}(q), \qquad n \geq 1,
\end{align}
with $F_0(q)=F_1(q)=1$. Recently, Mansour \cite{Mqstern} studied a $q$-analogue of the polynomials $B_n(t)$ given by
\begin{align*}
B_{2n}(q,t)=tB_n(q,t),\quad B_{2n+1}(q,t)=qB_n(q,t)+B_{n+1}(q,t), \qquad n \geq 1,
\end{align*}
with the same initial conditions.  Bates and Mansour \cite{BM} used the polynomials defined by \eqref{eq2} to define the $q$-analogue of the Calkin-Wilf tree, and found a statistic on the set of hyperbinary expansions of $n$ for which $F_{n+1}(q)$ is the distribution polynomial.

We will be considering an enumeration related to the following set of expansions of $n$ into powers of a given positive integer $m$.

\begin{definition}
Fix $m\geq2$ and $0\leq c\leq m-1$. By a $c$-hyper $m$-expansion of a positive integer $n$, we mean a partition of $n$ into powers of $m$ in which a given power can appear exactly $j$ times, where $j\in\{0,1,\ldots,m-1,m+c\}$.
\end{definition}

Note that the $m=2, c=0$ case of the preceding definition corresponds to the hyperbinary expansions of $n$.  We consider the following related sequence of polynomials.

\begin{definition}
Given $m\geq2$ and $0\leq c\leq m-1$, define the sequence of polynomials $f_{m,c}(d;q)$ for $d \geq 0$ by
\begin{align}\label{eqff}
\begin{array}{l}
f_{m,c}(mn+j;q)=f_{m,c}(n;q),\qquad j=0,1,\ldots,c-1,c+1,\ldots,m-1,\\
f_{m,c}(mn+c;q)=f_{m,c}(n;q)+qf_{m,c}(n-1;q),
\end{array}
\end{align}
with $f_{m,c}(0;q)=1$ and $f_{m,c}(d;q)=0$ for $d<0$.
\end{definition}

Note that the $f_{m,c}(n;q)$ provide a generalization of the sequence defined by \eqref{eq2} in that $f_{2,0}(n;q)=F_{n+1}(q)$ for all $n \geq 0$.

The presentation of this paper is as follows.  In the next section, we provide combinatorial interpretations for the polynomials $f_{m,c}(n;q)$ in terms of $c$-hyper $m$-expansions of $n$.  In the third section, we describe $m$-ary trees whose vertices are labeled by ratios of consecutive terms of the sequence $f_{m,c}(n;q)$.  Different trees are needed depending on whether $c=m-1$, $c=0$ or $1 \leq c \leq m-2$. When $m=2$, one obtains a variant of the $q$-Calkin-Wilf tree considered in \cite{BM}.  In the case $c=m-1$, the rational functions labeling the vertices of certain branches of the tree may be expressed in terms of Chebyshev polynomials of the second kind.  Furthermore, for all $c$, it is shown that each rational number in the interval $(0,1]$ appears at least once in the corresponding $m$-ary tree when $q=1$.

\section{$(q,c)$-hyper $m$-expansions of the number $n$}

We first define the concept of a $(q,c)$-hyper $m$-expansion of the number $n$.
\begin{definition}
Fix $m \geq 2$ and $0\leq c\leq m-1$. We denote the set of all $c$-hyper $m$-expansions of a positive integer $n$ by $\mathbb{H}_{m,c,n}$ and the number of powers that are used exactly $m+c$ times in the hyper $m$-expansion $x\in\mathbb{H}_{m,c,n}$ by $h_{m,c,n}(x)$. The $(q,c)$-hyper $m$-expansion of $x$ is defined as $q^{h_{m,c,n}(x)}$.
\end{definition}

\begin{definition}
Let $g_{m,c}(n;q)$ be the polynomial consisting of the sum of $(q,c)$-hyper $m$-expansions of $n$, with
$g_{m,c}(0;q)=1$ and $g_{m,c}(r;q)=0$ if $r<0$.
\end{definition}

For example, the $2$-hyper $3$-expansions of $47$ are $27+9+9+1+1$,  $27+9+3+3+1+1+1+1+1$, $9+9+9+9+9+9+1+1$ and $27+3+3+3+3+3+1+1+1+1+1+1$.
Thus, the $(q,2)$-hyper $3$-expansions of $47$ are $q^0$, $q^1$, $q^1$ and
$q^2$ and, accordingly, $g_{3,2}(47;q)=1+2q+q^2$.

\begin{theorem}
For all $n\geq0$, $g_{m,c}(n;q)=f_{m,c}(n;q)$.
\end{theorem}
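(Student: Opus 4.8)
The plan is to prove that $g_{m,c}(n;q)$ satisfies the very recurrence \eqref{eqff} defining $f_{m,c}(n;q)$, together with the same initial data; since a sequence is uniquely determined by \eqref{eqff} and the conditions $f_{m,c}(0;q)=1$ and $f_{m,c}(d;q)=0$ for $d<0$, this forces $g_{m,c}=f_{m,c}$, and the theorem then follows by induction on $n$ (note $n'<n$ whenever $n\ge1$, since $m\ge2$). First I would dispose of the base cases: the empty partition is the unique $c$-hyper $m$-expansion of $0$ and uses no power $m+c$ times, so $g_{m,c}(0;q)=q^0=1$, while $g_{m,c}(r;q)=0$ for $r<0$ by convention, matching $f_{m,c}$.

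For the recurrence, I would condition on the multiplicity $a_0$ of the smallest part $m^0=1$ in an expansion $x\in\mathbb{H}_{m,c,n}$, where we write $n=mn'+j$ with $0\le j\le m-1$. Since every other part is a positive power of $m$, and hence divisible by $m$, the residual sum $n-a_0$ must be divisible by $m$, forcing $a_0\equiv j\pmod m$. The crucial observation is that among the admissible multiplicities $\{0,1,\ldots,m-1,m+c\}$, the class of $j$ is represented by $a_0=j$ always, and additionally by $a_0=m+c$ precisely when $j=c$ (as $m+c\equiv c\pmod m$). Thus $a_0$ is uniquely determined unless $j=c$, in which case there are exactly two choices; this dichotomy is exactly what separates the two lines of \eqref{eqff}.

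The remaining ingredient is a weight-preserving bijection obtained by \emph{dividing by~$m$}: deleting the $a_0$ copies of $1$ from $x$ and replacing each remaining part $m^i$ ($i\ge1$) by $m^{i-1}$ carries the expansions of $n$ having a fixed value of $a_0$ onto all of $\mathbb{H}_{m,c,n'}$ (when $a_0=j$, so $n-a_0=mn'$) or onto all of $\mathbb{H}_{m,c,n'-1}$ (when $a_0=m+c$ and $j=c$, so $n-a_0=m(n'-1)$). The inverse multiplies all parts by $m$ and prepends $a_0$ copies of $1$. This alters the statistic $h_{m,c,n}$ only through the part $m^0$: when $a_0=j\ne m+c$ the $1$-part contributes $0$ to $h$, so the weight is unchanged, whereas when $a_0=m+c$ the $1$-part contributes a single factor of $q$. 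Summing the weights over each case then yields
\begin{align*}
g_{m,c}(mn'+j;q)&=g_{m,c}(n';q),\qquad j\ne c,\\
g_{m,c}(mn'+c;q)&=g_{m,c}(n';q)+q\,g_{m,c}(n'-1;q),
\end{align*}
which is precisely \eqref{eqff}, completing the induction.

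The main point requiring care is verifying that the index-shift map is genuinely a bijection onto the \emph{full} sets $\mathbb{H}_{m,c,n'}$ and $\mathbb{H}_{m,c,n'-1}$: one must confirm that the admissible multiplicity set $\{0,1,\ldots,m-1,m+c\}$ is identical for every power of $m$, so that shifting all indices down by one neither destroys nor creates any admissibility constraint on the parts $m^i$ with $i\ge1$, and that this shift preserves the count of powers used exactly $m+c$ times. Once this is checked, the decomposition by $a_0$ and the residue analysis above deliver the recurrence directly.
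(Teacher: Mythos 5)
Your proof is correct and follows essentially the same route as the paper's: both establish that $g_{m,c}$ satisfies the recurrence \eqref{eqff} with the same initial data and conclude by induction on $n$. The only difference is one of completeness — the paper simply asserts the identities $g_{m,c}(mr+j;q)=g_{m,c}(r;q)$ and $g_{m,c}(mr+c;q)=g_{m,c}(r;q)+qg_{m,c}(r-1;q)$ without justification, whereas you supply the combinatorial core (conditioning on the multiplicity $a_0$ of the part $1$, the residue analysis forcing $a_0=j$ or $a_0=m+c$, and the weight-preserving division-by-$m$ bijection), so your argument is, if anything, the more complete of the two.
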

\begin{proof}
We proceed by induction on $n$. Since $g_{m,c}(0;q)=1=f_{m,c}(0;q)$, the claim holds for $n=0$. Assume that the claim holds for $0,1,\ldots,n-1$ and let us prove it for $n$. By the induction hypothesis and \eqref{eqff}, we have that
\begin{itemize}
\item if $n=mr+j$ with $j\in\{0,1,\ldots,c-1,c+1,\ldots,m-1\}$, then
\begin{align*}
g_{m,c}(n;q)&=g_{m,c}(mr+j;q)=g_{m,c}(r;q)\\
&=f_{m,c}(r;q)=f_{m,c}(mr+j;q)=f_{m,c}(n;q),
\end{align*}

\item if $n=mr+c$, then
\begin{align*}
g_{m,c}(n;q)&=g_{m,c}(mr+c;q)=g_{m,c}(r;q)+qg_{m,c}(r-1;q)\\
&=f_{m,c}(r;q)+qf_{m,c}(r-1;q)=f_{m,c}(mr+c;q)=f_m(n;q).
\end{align*}
\end{itemize}
This completes the induction.
\end{proof}

\section{ $(q,c)$-Calkin-Wilf trees of order $m$}

In this section, we define $m$-ary trees whose vertices are labeled with certain rational functions of $q$, in particular, by ratios of consecutive terms of the $g_{m,c}(n;q)$ sequence.

\subsection{Case $c=m-1$}

In this subsection, we construct an $m$-ary tree whose vertices are labeled by the ratios of certain terms within the $g_{m,c}(n;q)$ sequence in the case when $c=m-1$ and consider some of its properties.

\begin{definition}\label{defqmt}
If $m \geq 3$, then the $(q,m-1)$-Calkin-Wilf tree of order $m$ is an $m$-ary tree with root $\frac{1}{1}$. A vertex labeled $\frac{a}{b}$ is a parent of $m$ children defined, from left to right, as follows. Each of the first $m-2$ children is $\frac{1}{1+q}$, with the $(m-1)$-st child given by $\frac{b}{b+qa}$.  To define the $m$-th child, suppose $\frac{a_j}{b_j}$ is the $(m-1)$-st child of $\frac{a_{j+1}}{b_{j+1}}$ for $j=1,2,\ldots,s-1$, where $\frac{a_1}{b_1}=\frac{a}{b}$ and $s\geq 1$ is maximal.  Then the $m$-th child of $\frac{a}{b}$ is given by $\frac{1}{1+qp\prod_{j=1}^{s}\frac{b_j}{a_j}}$, where $p=\frac{a_{s+1}}{b_{s+1}}$ if $\frac{a_s}{b_s}$ is the $(m-2)$-nd child of $\frac{a_{s+1}}{b_{s+1}}$ and $p=1$ otherwise.
\end{definition}

The following figure illustrates the $(q,2)$-Calkin-Wilf tree of order $3$ when $q=1$.\pagebreak

\begin{figure}[htp]
\begin{center}
\pstree[nodesep=2pt,levelsep=6ex,treesep=5pt]{\TR{$\frac11$} }{
        \pstree{\TR{$\frac12$}}
                {
                \pstree{\TR{$\frac12$}}
                    {
                    \TR{$\frac12$}
                    \TR{$\frac23$}
                    \TR{$\frac12$}
                    }
                \pstree{\TR{$\frac23$}}
                    {
                    \TR{$\frac12$}
                    \TR{$\frac35$}
                    \TR{$\frac14$}
                    }
                \pstree{\TR{$\frac13$}}
                    {
                    \TR{$\frac12$}
                    \TR{$\frac34$}
                    \TR{$\frac14$}
                    }
                }
        \pstree{\TR{$\frac12$}}
              {
               \pstree{\TR{$\frac12$}}
                    {
                    \TR{$\frac12$}
                    \TR{$\frac23$}
                    \TR{$\frac12$}
                    }
               \pstree{\TR{$\frac23$}}
                    {
                    \TR{$\frac12$}
                    \TR{$\frac35$}
                    \TR{$\frac14$}
                    }
               \pstree{\TR{$\frac13$}}
                    {
                    \TR{$\frac12$}
                    \TR{$\frac34$}
                    \TR{$\frac14$}
                    }
              }
        \pstree{\TR{$\frac12$}}
              {
              \pstree{\TR{$\frac12$}}
                    {
                    \TR{$\frac12$}
                    \TR{$\frac23$}
                    \TR{$\frac12$}
                    }
              \pstree{\TR{$\frac23$}}
                    {
                    \TR{$\frac12$}
                    \TR{$\frac35$}
                    \TR{$\frac14$}
                    }
              \pstree{\TR{$\frac13$}}
                    {
                    \TR{$\frac12$}
                    \TR{$\frac34$}
                    \TR{$\frac14$}
                    }
               }
        }
\end{center}
\caption{The first four levels of the $(q,2)$-Calkin-Wilf tree of order $3$ with $q=1$.}\label{fig3e}
\end{figure}

\begin{theorem}
Let $m\geq3$ and let the concatenation of successive levels of the $(q,m-1)$-Calkin-Wilf tree of order $m$ form a sequence $\{\ell_m(n;q)\}_{n\geq0}$. Then
$$\ell_m(n;q)=\frac{g_{m,m-1}(mn+m-2;q)}{g_{m,m-1}(mn+m-1;q)},$$
for all $n\geq0$.
\end{theorem}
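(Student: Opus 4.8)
The plan is to prove the equivalent, and more symmetric, statement that every vertex of the tree carries the label $\frac{g(n)}{g(n)+qg(n-1)}$, where throughout I abbreviate $g(k)=g_{m,m-1}(k;q)$. Indeed, since $c=m-1$, the recursion \eqref{eqff} gives $g(mk+m-2)=g(k)$ (as $m-2\neq c$) and $g(mk+m-1)=g(k)+qg(k-1)$ (the $c$-rule), so $\frac{g(mk+m-2)}{g(mk+m-1)}=\frac{g(k)}{g(k)+qg(k-1)}$ for every $k\geq0$; hence the asserted formula is the same as $\ell_m(n;q)=\frac{g(n)}{g(n)+qg(n-1)}$. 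I would argue by induction on the level, the inductive hypothesis being that every vertex at level at most $L$ carries this label. The root gives $\frac{g(0)}{g(0)+qg(-1)}=\frac{1}{1}$, the base case. For the inductive step, the children of the vertex indexed $n$ (reading breadth-first with the root indexed $0$) are the vertices $mn+1,\ldots,mn+m$, so it suffices to show that the $i$-th child equals $\frac{g(mn+i)}{g(mn+i)+qg(mn+i-1)}$ and to match this against Definition \ref{defqmt}.

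First I would dispatch the easy children by evaluating $g(mn+i)$ through \eqref{eqff}: one has $g(mn+i)=g(n)$ for $1\leq i\leq m-2$, $g(mn+(m-1))=g(n)+qg(n-1)$, and $g(mn+m)=g(n+1)$. Writing $\frac{a}{b}=\frac{g(n)}{g(n)+qg(n-1)}$ for the parent label, the target for $1\leq i\leq m-2$ collapses to $\frac{g(n)}{g(n)+qg(n)}=\frac{1}{1+q}$, and the target for $i=m-1$ becomes $\frac{g(n)+qg(n-1)}{(g(n)+qg(n-1))+qg(n)}=\frac{b}{b+qa}$; these agree with the first $m-2$ children and the $(m-1)$-st child prescribed by the definition.

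The crux is the $m$-th child, whose target is $\frac{g(n+1)}{g(n+1)+q(g(n)+qg(n-1))}$, to be matched with $\frac{1}{1+qp\prod_{j=1}^{s}\frac{b_j}{a_j}}$; equivalently I must show $p\prod_{j=1}^{s}\frac{b_j}{a_j}=\frac{g(n)+qg(n-1)}{g(n+1)}$. Let $k_1=n,k_2,\ldots,k_s$ be the indices of the vertices $\frac{a_1}{b_1},\ldots,\frac{a_s}{b_s}$ in the ancestor chain, so $k_j=mk_{j+1}+(m-1)$ for $1\leq j\leq s-1$; all of these lie at levels at most $L$, so the inductive hypothesis applies, giving $a_j=g(k_j)$ and $b_j=g(k_j)+qg(k_j-1)$. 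The $(m-1)$-rule turns $g(k_j)$ into $g(k_{j+1})+qg(k_{j+1}-1)=b_{j+1}$, so $a_j=b_{j+1}$ and the product telescopes to $\prod_{j=1}^{s}\frac{b_j}{a_j}=\frac{b_1}{a_s}=\frac{g(n)+qg(n-1)}{g(k_s)}$. Likewise $k_j+1=m(k_{j+1}+1)$ gives $g(k_j+1)=g(k_{j+1}+1)$, whence $g(n+1)=g(k_s+1)$. Thus it remains only to prove $p=\frac{g(k_s)}{g(k_s+1)}$.

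For this last point I would split on the definition of $p$. If $\frac{a_s}{b_s}$ is the $(m-2)$-nd child of $\frac{a_{s+1}}{b_{s+1}}$, then $k_s=mk_{s+1}+(m-2)$, whence $g(k_s)=g(k_{s+1})=a_{s+1}$ and $g(k_s+1)=g(mk_{s+1}+(m-1))=g(k_{s+1})+qg(k_{s+1}-1)=b_{s+1}$, so $\frac{g(k_s)}{g(k_s+1)}=\frac{a_{s+1}}{b_{s+1}}=p$. Otherwise $p=1$, and here the maximality of $s$ forces $k_s\not\equiv m-1\pmod m$ while the present case forces $k_s\not\equiv m-2\pmod m$ (with $k_s$ possibly the root); a direct application of \eqref{eqff} to the residue of $k_s$ modulo $m$ then yields $g(k_s)=g(k_s+1)$, i.e.\ $\frac{g(k_s)}{g(k_s+1)}=1=p$. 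This closes the induction. I expect the genuine difficulty to lie entirely in the $m$-th child: correctly identifying the ancestor chain $k_1,\ldots,k_s$, recognizing the telescoping $a_j=b_{j+1}$, and then reconciling the two definitions of $p$ with the quotient $g(k_s)/g(k_s+1)$; the remaining children are immediate once $g(mn+i)$ is simplified.
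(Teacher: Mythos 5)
Your proof is correct and follows essentially the same route as the paper's: induction on the breadth-first index, direct computation for the first $m-1$ children, and, for the $m$-th child, an analysis of the ancestor chain $n=k_1,k_2,\ldots,k_s$ in which the product $\prod_{j}b_j/a_j$ telescopes and $p$ is identified with $g(k_s)/g(k_s+1)$ by the same case analysis on $k_s \bmod m$. The only cosmetic difference is that you collapse the definition's product down to $\frac{g(n)+qg(n-1)}{g(k_s)}$ and match it against the target, whereas the paper expands the target $\frac{g(n)}{g(n+1)}$ out into the product form appearing in the definition.
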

\begin{proof}
Let $\mathcal{T}_m$ be the $(q,m-1)$-Calkin-Wilf tree of order $m$, where $m \geq 3$.
We proceed by induction on $n$. Since $\ell_m(0,q)=\frac{g_{m,m-1}(m-2;q)}{g_{m,m-1}(m-1;q)}=\frac{1}{1}$, the claim holds for $n=0$.
Assume that the claim holds for $\ell_m(0;q),\ell_m(1;q),\ldots,\ell_m(n;q)$ of $\mathcal{T}_m$, and let us prove it for the children of $\ell_m(n;q)$. One can verify that the children of  $\ell_m(n;q)$ are $\ell_m(mn+1;q),\ell_m(mn+2;q),\ldots,\ell_m(mn+m;q)$. By the induction hypothesis and Definition \ref{defqmt}, we have
\begin{align*}
\frac{g_{m,m-1}(m(mn+j)+m-2;q)}{g_{m,m-1}(m(mn+j)+m-1;q)}&=\frac{g_{m,m-1}(mn+j;q)}{g_{m,m-1}(mn+j;q)+qg_{m,m-1}(mn+j-1;q)}\\
&=\frac{g_{m,m-1}(n;q)}{g_{m,m-1}(n;q)+qg_{m,m-1}(n;q)}\\
&=\frac{1}{1+q}=\ell_m(mn+j;q),
\end{align*}
for all $j \in [m-2]=\{1,2,\ldots,m-2\}$. Thus, the claim holds for $\ell_m(mn+j;q)$ when $j\in[m-2]$.
Also, we have
\begin{align*}
&\frac{g_{m,m-1}(m(mn+m-1)+m-2;q)}{g_{m,m-1}(m(mn+m-1)+m-1;q)}\\
&\qquad\qquad=\frac{g_{m,m-1}(mn+m-1;q)}{g_{m,m-1}(mn+m-1;q)+qg_{m,m-1}(mn+m-2;q)}\\
&\qquad\qquad=\frac{1}{1+q\ell_m(n;q)}=\ell_m(mn+m-1;q),
\end{align*}
which implies that the claim holds for $\ell_m(mn+m-1;q)$. Thus, it remains to show that  $\frac{g_{m,m-1}(m(mn+m)+m-2;q)}{g_{m,m-1}(m(mn+m)+m-1;q)}=\ell_m(mn+m;q)$. To do so, let $n_1=n$ and $n_j=mn_{j+1}+m-1$ for $j=1,2,\ldots,s-1$, with $s\geq1$ maximal. Thus, either $n_s=mn'+j$ for some $j\in\{1,2,\ldots,m-2,m\}$ or $n_s=0$. By the definitions, we have
\begin{align*}
&\frac{g_{m,m-1}(m(mn+m)+m-2;q)}{g_{m,m-1}(m(mn+m)+m-1;q)}\\
\qquad\qquad&=\frac{g_{m,m-1}(mn+m;q)}{g_{m,m-1}(mn+m;q)+qg_{m,m-1}(mn+m-1;q)}\\
\qquad\qquad&=\frac{g_{m,m-1}(n+1;q)}{g_{m,m-1}(n+1;q)+q(g_{m,m-1}(n;q)+qg_{m,m-1}(n-1;q))}\\
\qquad\qquad&=\frac{1}{1+q\frac{g_{m,m-1}(n;q)+qg_{m,m-1}(n-1;q)}{g_{m,m-1}(n;q)}\frac{g_{m,m-1}(n;q)}{g_{m,m-1}(n+1;q)}},
\end{align*}
with
\begin{align*}
\frac{g_{m,m-1}(n_1;q)}{g_{m,m-1}(n_1+1;q)}&=\frac{g_{m,m-1}(mn_2+m-1;q)}{g_{m,m-1}(mn_2+m;q)}\\
&=\frac{g_{m,m-1}(n_2;q)+qg_{m,m-1}(n_2-1;q)}{g_{m,m-1}(n_2+1;q)}\\
&=\frac{g_{m,m-1}(n_2;q)+qg_{m,m-1}(n_2-1;q)}{g_{m,m-1}(n_2;q)}\frac{g_{m,m-1}(n_2;q)}{g_{m,m-1}(n_2+1;q)}\\
&=\cdots\\
&=\frac{g_{m,m-1}(n_s;q)}{g_{m,m-1}(n_s+1;q)}\prod_{j=2}^s\frac{g_{m,m-1}(n_j;q)+qg_{m,m-1}(n_j-1;q)}{g_{m,m-1}(n_j;q)}.
\end{align*}
Note that if $j\in[m-3]$ of if $n_s=0$, then
$$\frac{g_{m,m-1}(n_s;q)}{g_{m,m-1}(n_s+1;q)}=\frac{g_{m,m-1}(n';q)}{g_{m,m-1}(n';q)}=1.$$
If $j=m$, then
$$\frac{g_{m,m-1}(n_s;q)}{g_{m,m-1}(n_s+1;q)}=\frac{g_{m,m-1}(n'+1;q)}{g_{m,m-1}(n'+1;q)}=1.$$
If $j=m-2$, then
$$\frac{g_{m,m-1}(n_s;q)}{g_{m,m-1}(n_s+1;q)}=\frac{g_{m,m-1}(mn'+m-2;q)}{g_{m,m-1}(mn'+m-1;q)}=\ell_m(n';q).$$
Hence,
\begin{align*}
\frac{g_{m,m-1}(n_1;q)}{g_{m,m-1}(n_1+1;q)}
&=\frac{g_{m,m-1}(n_s;q)}{g_{m,m-1}(n_s+1;q)}\prod_{j=2}^s\frac{g_{m,m-1}(n_j;q)+qg_{m,m-1}(n_j-1;q)}{g_{m,m-1}(n_j;q)}\\
&=p\prod_{j=2}^s\frac{g_{m,m-1}(n_j;q)+qg_{m,m-1}(n_j-1;q)}{g_{m,m-1}(n_j;q)},
\end{align*}
where $p$ is as defined above.  Thus, by the induction hypothesis and the definitions, we have
\begin{align*}
\frac{g_{m,m-1}(m(mn+m)+m-2;q)}{g_{m,m-1}(m(mn+m)+m-1;q)}&=\frac{1}{1+qp\prod_{j=1}^s\frac{g_{m,m-1}(n_j;q)+qg_{m,m-1}(n_j-1;q)}{g_{m,m-1}(n_j;q)}}\\
&=\frac{1}{1+qp\prod_{j=1}^s\frac{g_{m,m-1}(mn_j+m-1;q)}{g_{m,m-1}(mn_j+m-2;q)}}\\
&=\frac{1}{1+qp\prod_{j=1}^s\frac{1}{\ell_m(n_j;q)}}\\
&=\ell_m(mn+m;q).
\end{align*}
Hence, the children of $\ell_m(n;q)=\frac{g_{m,m-1}(mn+m-2;q)}{g_{m,m-1}(mn+m-1;q)}$ in $\mathcal{T}_m$ are $$\left\{\ell_m(mn+j;q)=\frac{g_{m,m-1}(m(mn+j)+m-2;q)}{g_{m,m-1}(m(mn+j)+m-1;q)}\right\}_{j=1}^m,$$ which completes the induction.
\end{proof}

\emph{Remark:} The result of the previous theorem also holds when $m=2$, provided that the $p$ factor in Definition \ref{defqmt} is adjusted in the case $m=2$ as follows: let $p=\ell_2(n'+1;q)$ if $n_s=2n'+2$, where $n_s$ and $n'$ are as defined in the prior proof, and $p=1$ otherwise.

\begin{definition} Let $v$ be any vertex of the $(q,m-1)$-Calkin-Wilf tree of order $m$ and let $j\in[m]$. The set of all vertices $v_1=v,v_2,v_3,\ldots$ such that $v_i$ is the $j$-th child of $v_{i-1}$ for $i\geq2$ will be denoted by $B_{v,j}$ and will be called the $j$-th branch of $v$.
\end{definition}

For example, the $m$-th branch of the root $1/1$  is given by
$B_{1/1,m}=\{1/1,1/(1+q),1/(1+q+q^2),1/(1+q+q^2+q^3),\ldots\}$ for $m \geq 3$.

In order to state our next result, we recall the Chebyshev polynomials of the second kind (see \cite{Ri}) defined by the recurrence  \begin{align}U_n(t)=2tU_{n-1}(t)-U_{n-2}(t), \qquad n \geq 2,\label{equu}
\end{align}
with $U_0(t)=1$ and $U_1(t)=2t$.

\begin{theorem}
The $(m-1)$-st branch of the root in the $(q,m-1)$-Calkin-Wilf tree of order $m\geq 2$ is given by
$$B_{1/1,m-1}=\left\{\frac{U_j\left(\frac{1}{2\sqrt{-q}}\right)}{\sqrt{-q}U_{j+1}\left(\frac{1}{2\sqrt{-q}}\right)}\right\}_{j\geq0}.$$
If $m\geq 3$, then the $j$-th branch of the root is given by
$B_{1/1,j}=\{1/1,1/(1+q),1/(1+q),1/(1+q),\ldots\}$ for $j \in [m-2]$, and the $m$-th branch is given by
$B_{1/1,m}=\{1/1,1/(1+q),1/(1+q+q^2),1/(1+q+q^2+q^3),\ldots\}$.
\end{theorem}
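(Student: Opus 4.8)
The plan is to treat the three assertions separately, in each case by inducting down the relevant branch of the root. The middle assertion, concerning the branches $B_{1/1,j}$ for $j \in [m-2]$, is essentially immediate: by Definition \ref{defqmt} the first $m-2$ children of \emph{every} vertex equal $\frac{1}{1+q}$, so starting from the root $\frac{1}{1}$ one obtains $\frac{1}{1+q}$ at the first step and $\frac{1}{1+q}$ repeatedly thereafter, independently of the vertex label. I would record this as the base case plus a one-line induction.

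For the $m$-th branch $B_{1/1,m}$, the key structural observation is that every vertex occurring strictly below the root along this branch is, by construction, an $m$-th child of its parent, and hence is neither an $(m-1)$-st child nor an $(m-2)$-nd child of any vertex. Referring to the $m$-th-child rule in Definition \ref{defqmt}, this forces the chain length $s$ to equal $1$ and the correction factor $p$ to equal $1$ at every such vertex (and also at the root, which has no parent). Consequently the $m$-th-child rule collapses to the simple map sending a vertex labeled $\frac{1}{D}$ to $\frac{1}{1+qD}$. First I would verify this reduction carefully from the definition, and then solve the scalar recurrence $D_0 = 1$, $D_{k+1} = 1 + qD_k$, whose solution $D_k = 1 + q + \cdots + q^{k}$ yields exactly the claimed list of labels.

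For the $(m-1)$-st branch $B_{1/1,m-1}$, I would first extract the governing recurrence. Writing the $k$-th vertex as $\frac{a_k}{b_k}$ with $\frac{a_0}{b_0} = \frac{1}{1}$, the rule that the $(m-1)$-st child of $\frac{a}{b}$ is $\frac{b}{b+qa}$ gives $a_{k+1} = b_k$ and $b_{k+1} = b_k + qa_k$, equivalently $\frac{a_{k+1}}{b_{k+1}} = \frac{1}{1 + q\,a_k/b_k}$. I would then prove by induction on $k$ that $\frac{a_k}{b_k} = \frac{U_k(t)}{\sqrt{-q}\,U_{k+1}(t)}$ with $t = \frac{1}{2\sqrt{-q}}$. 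Substituting the inductive hypothesis into the recurrence reduces the inductive step, after clearing denominators and cancelling the common factor $U_{k+1}(t)$, to the single identity $(-q)U_{k+2}(t) = \sqrt{-q}\,U_{k+1}(t) + q\,U_k(t)$; using $(\sqrt{-q})^2 = -q$ and $2t = 1/\sqrt{-q}$, this is precisely the Chebyshev recurrence \eqref{equu} at $t$, multiplied through by $-q$. This same computation covers the $m=2$ case, where the first child coincides with the $(m-1)$-st child and obeys the identical formula $\frac{b}{b+qa}$.

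The two inductions are routine once the reductions are in place; the main obstacle is the bookkeeping, in two spots. In the $m$-th-branch argument the delicate point is to justify $s=1$ and $p=1$ \emph{as statements about the specific tree positions} rather than the repeated labels — for instance, the vertex labeled $\frac{1}{1+q}$ appearing on the $m$-th branch must be distinguished from the identically labeled $(m-2)$-nd children, since the definition of $B_{v,j}$ tracks positions, not labels. In the Chebyshev argument the care lies in the $\sqrt{-q}$ normalization: the numerator and denominator of the closed form scale differently, so one must track representatives rather than only the ratio, and confirm that the cleared identity matches \eqref{equu} with the correct sign, which hinges on the identity $\sqrt{-q}/(-q) = 1/\sqrt{-q}$.
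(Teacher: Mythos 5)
Your proposal is correct and follows essentially the same route as the paper: both reduce the $(m-1)$-st branch to the recurrence $y_{j+1}=1/(1+qy_j)$ with $y_0=1$ and verify the Chebyshev closed form by induction via \eqref{equu} with $2t=1/\sqrt{-q}$. The only immaterial difference is that you read this recurrence directly off the child rule $\frac{a}{b}\mapsto\frac{b}{b+qa}$ of Definition \ref{defqmt}, whereas the paper first identifies the branch with the subsequence $\{x_{m^j-1}\}$ of ratios of the polynomials $g_{m,m-1}$ and derives the same recurrence from \eqref{eqff}; your handling of the other branches (the $s=1$, $p=1$ collapse for the $m$-th branch and the constant $\frac{1}{1+q}$ branches) is exactly what the paper leaves as ``follows from the definitions.''
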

\begin{proof}
Let $x_n=\frac{g_{m,m-1}(mn+m-2;q)}{g_{m,m-1}(mn+m-1;q)}$ for $n \geq0$. By induction, one can show that the $(m-1)$-st branch is given by
$B_{1/1,m-1}=\left\{x_{m^j-1}\right\}_{j\geq0}$.
By \eqref{eqff}, we have
\begin{align*}
x_{m^j-1}&=\frac{g_{m,m-1}(m(m^j-1)+m-2;q)}{g_{m,m-1}(m(m^j-1)+m-1;q)}\\
&=\frac{g_{m,m-1}(m^j-1;q)}{g_{m,m-1}(m^j-1;q)+qg_{m,m-1}(m^j-2;q)}\\
&=\frac{g_{m,m-1}(m(m^{j-1}-1)+m-1;q)}{g_{m,m-1}(m(m^{j-1}-1)+m-1;q)+qg_{m,m-1}(m(m^{j-1}-1)+m-2;q)}\\
&=\frac{1}{1+qx_{m^{j-1}-1}}, \qquad j \geq 1,
\end{align*}
with $x_0=1$.  By induction on $j$ and \eqref{equu}, we obtain
$$x_{m^j-1}=\frac{U_j\left(\frac{1}{2\sqrt{-q}}\right)}{\sqrt{-q}U_{j+1}\left(\frac{1}{2\sqrt{-q}}\right)}, \qquad j \geq 0.$$
The second statement follows from the definitions.
\end{proof}

Similarly, one can show the following result.

\begin{theorem}
Let $v=\frac{U_j\left(\frac{1}{2\sqrt{-q}}\right)}{\sqrt{-q}U_{j+1}\left(\frac{1}{2\sqrt{-q}}\right)}$. Then
the $(m-1)$-st branch of $v$ in the $(q,m-1)$-Calkin-Wilf tree of order $m\geq2$ is given by
$$B_{v,m-1}=\left\{\frac{U_i\left(\frac{1}{2\sqrt{-q}}\right)}{\sqrt{-q}U_{i+1}\left(\frac{1}{2\sqrt{-q}}\right)}\right\}_{i\geq j}.$$
If $m \geq 3$, then the $j$-th branch of $v$ is given by
$B_{v,j}=\{v,1/(1+q),1/(1+q),1/(1+q),\ldots\}$ for $j \in [m-2]$, and the $m$-th branch is given by
$B_{v,m}=\{v,1/(1+qt),1/(1+q+q^2t),1/(1+q+q^2+q^3t),\ldots\}$, where $t=\sqrt{-q}^{j+1}U_{j+1}\left(\frac{1}{2\sqrt{-q}}\right)$.
\end{theorem}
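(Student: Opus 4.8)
The plan is to mirror the structure of the previous theorem's proof, but now tracking a general vertex $v$ of the form $x_{m^j-1}$ rather than the root $x_0=1/1$. The key observation is that the previous theorem already identified the root's $(m-1)$-st branch as $\{x_{m^j-1}\}_{j\geq0}$, so a vertex $v$ with the stated Chebyshev label is precisely some $x_{m^j-1}$. First I would establish, by the same induction used before, that the $(m-1)$-st branch of $v=x_{m^j-1}$ consists of the vertices $\{x_{m^i-1}\}_{i\geq j}$; this follows because applying the $(m-1)$-st child operation to $x_{m^i-1}$ yields $x_{m^{i+1}-1}$, exactly the recurrence $x_{m^{i+1}-1}=\frac{1}{1+qx_{m^i-1}}$ derived in the preceding theorem. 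The Chebyshev closed form for each $x_{m^i-1}$ then transfers verbatim, giving the first assertion.

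Next I would handle the $j$-th branches for $j\in[m-2]$. By Definition~\ref{defqmt}, each of the first $m-2$ children of any vertex is the constant $\frac{1}{1+q}$, and since that child itself has its first $m-2$ children equal to $\frac{1}{1+q}$, the branch stabilizes immediately after the root $v$. Thus $B_{v,j}=\{v,1/(1+q),1/(1+q),\ldots\}$, which is the stated form. This step is purely mechanical, reading off the definition.

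The substantive part is the $m$-th branch. Here I would set $v=x_{m^j-1}$ and compute its successive $m$-th children using the formula for the $m$-th child in Definition~\ref{defqmt}, namely $\frac{1}{1+qp\prod_{i=1}^{s}\frac{b_i}{a_i}}$. The role of $v$ being a nontrivial fraction (rather than $1/1$) is that the product $p\prod\frac{b_i}{a_i}$ picks up an extra factor coming from the terminal vertex of the chain of $(m-1)$-st children, which traces back to $v$ itself. Concretely, I expect the parameter $t=\sqrt{-q}^{\,j+1}U_{j+1}\!\left(\frac{1}{2\sqrt{-q}}\right)$ to arise precisely as the reciprocal $1/v$ (up to the Chebyshev normalization), since $v=\frac{U_j}{\sqrt{-q}\,U_{j+1}}$ gives $1/v=\sqrt{-q}\,U_{j+1}/U_j$, and the $p$-factor in the $m$-th child computation introduces exactly this quantity. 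I would verify by a short induction that the $k$-th vertex along the $m$-th branch equals $\frac{1}{1+q+q^2+\cdots+q^{k-1}+q^k t}$, the inductive step being the substitution of the previous vertex into the $m$-th-child formula, tracking how the chain of $(m-1)$-st children terminates at $v$ and contributes the factor $t$.

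The main obstacle will be correctly identifying the $p$-factor and the terminal product $\prod\frac{b_i}{a_i}$ for a vertex lying off the root's principal spine. In the root case these simplified to $1$ or to $\ell_m(n';q)$ by the special structure near $x_0=1/1$; for a general $v=x_{m^j-1}$ the chain of $(m-1)$-st children ascending from the relevant vertex terminates not at $0$ but at $v$, so the bookkeeping of which case of Definition~\ref{defqmt} governs $p$ must be done carefully to produce the single consolidated factor $t$. Once that factor is pinned down, the remainder is the same telescoping computation as in the previous proof, which is why the paper can legitimately say ``similarly, one can show.''
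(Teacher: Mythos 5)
Your overall plan is the right one and is exactly what the paper intends by ``similarly, one can show'': identify $v$ with the vertex $x_{m^j-1}$ on the root's $(m-1)$-st branch, reuse the recurrence $x_{m^{i+1}-1}=\frac{1}{1+qx_{m^i-1}}$ for the first assertion, read off the constant branches from Definition~\ref{defqmt}, and compute the $m$-th branch by induction. The first two parts of your outline are correct as written.

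There is, however, a concrete error in your account of where $t$ comes from, and it is the one step that would actually fail if carried out as described. You assert that $t$ arises as the reciprocal $1/v=\sqrt{-q}\,U_{j+1}/U_j$ ``up to Chebyshev normalization,'' and that it is introduced by the $p$-factor. Neither is right. In computing the $m$-th child of $v=x_{m^j-1}$, the chain of Definition~\ref{defqmt} ascends from $v$ through \emph{all} of its $(m-1)$-st ancestors $x_{m^{j-1}-1},\ldots,x_0$ and terminates at the root (not at $v$), so $s=j+1$ and $p=1$ (the root is not an $(m-2)$-nd child of anything). The factor $t$ is the telescoping product over that entire chain,
\begin{equation*}
\prod_{i=0}^{j}\frac{1}{x_{m^i-1}}=\prod_{i=0}^{j}\frac{\sqrt{-q}\,U_{i+1}\left(\frac{1}{2\sqrt{-q}}\right)}{U_i\left(\frac{1}{2\sqrt{-q}}\right)}=\sqrt{-q}^{\,j+1}U_{j+1}\left(\tfrac{1}{2\sqrt{-q}}\right)=t,
\end{equation*}
which coincides with $1/v$ only when $j=0$. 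Once this first step is corrected, the rest of the branch is easy: each subsequent vertex $w_k$ is an $m$-th child, so its chain has $s=1$ and $p=1$, giving $w_{k+1}=\frac{1}{1+q/w_k}$ and hence $w_k=\frac{1}{1+q+\cdots+q^{k-1}+q^{k}t}$ by the induction you propose. With that substitution your argument goes through and matches the paper's intended adaptation of the preceding proof.
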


We conclude the case $c=m-1$ with the following result when $q=1$ concerning the tree $\mathcal{T}_m$.

\begin{theorem}
When $q=1$, each positive rational number less than or equal one appears at least once in the $(q,m-1)$-Calkin-Wilf tree of order $m$ for all $m\geq 3$.
\end{theorem}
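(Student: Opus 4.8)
The plan is to work directly with the vertex values at $q=1$, all of which lie in $(0,1]$ with the root equal to $1$, and to show that every rational in $(0,1)$ occurs (the value $1$ being the root itself). Setting $q=1$ in Definition \ref{defqmt}, a vertex labelled $\frac{a}{b}$ has its first $m-2$ children equal to $\frac12$, its $(m-1)$-st child equal to $\frac{b}{a+b}$, and an $m$-th child whose value depends on the vertex's \emph{position} in the tree. Only two evaluations of the $m$-th child are needed, and I would verify them first from Definition \ref{defqmt}: (1) if the parent is the root or is itself an $m$-th child, then $s=1$ and $p=1$, so the $m$-th child is $\frac{a}{a+b}$; and (2) if $u=\frac12$ occurs as the $(m-2)$-nd child of a vertex of value $p$, then for $u$ we again have $s=1$ but now $p$ equals the parent's value, so the $m$-th child of $u$ is $\frac{1}{1+2p}$.

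Call a vertex \emph{good} if it is the root or an $m$-th child, so that formula (1) applies to its $m$-th child and produces another $m$-th child, hence another good vertex. I would prove by strong induction on the denominator $b$ the statement $(\star)$: every $\frac ab\in(0,1)$ in lowest terms occurs in the tree, and moreover occurs at a good vertex whenever $\frac ab\le\frac12$. The base case $b=2$ is $\frac12$, realized as the $m$-th child of the root, which is good.

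For the inductive step I split on the size of $v=\frac ab$. If $\frac12<v<1$, then $\frac{b-a}{a}\in(0,1)$ has denominator $a<b$, so by induction it occurs, and its $(m-1)$-st child is $\frac{a}{(b-a)+a}=\frac ab=v$; this gives $v$ (no good-ness is claimed here, since $v$ is an $(m-1)$-st child). If $\frac13\le v<\frac12$, I use gadget (2): the value $p=\frac{b-a}{2a}$ lies in $(0,1]$, is either $1$ (the root, when $v=\frac13$) or has denominator dividing $2a<b$, hence by induction is the value of some vertex $z$; passing to the $(m-2)$-nd child $\frac12$ of $z$ and then to its $m$-th child yields $\frac{1}{1+2p}=\frac ab=v$, an $m$-th child and therefore good. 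Finally, if $0<v<\frac13$, then $u=\frac{a}{b-a}$ satisfies $u<\frac12$ and has denominator $b-a<b$; by induction $u$ occurs at a good vertex, and by formula (1) the $m$-th child of that good vertex is $\frac{a}{a+(b-a)}=\frac ab=v$, again an $m$-th child and hence good. This closes the induction, and together with the root it shows every rational in $(0,1]$ appears.

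The step I expect to be the main obstacle is precisely the position-dependence of the $m$-th child: unlike the $(m-1)$-st child, its value is not a function of the parent's value, so the naive Calkin--Wilf reverse search breaks down for values below $\frac12$. The resolution is the threshold at $\frac13$ together with the auxiliary $\frac12$-gadget, and the careful bookkeeping of the good-position invariant, so that the clean $s=1,\,p=1$ form of the $m$-th child is available exactly when it is invoked. Checking that good-ness is produced and consumed consistently across the three cases, and that each reduction strictly decreases the denominator, is the delicate part of the verification.
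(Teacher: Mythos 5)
Your proof is correct, and it takes a genuinely different route from the paper's. I checked your two local evaluations of the $m$-th child against Definition \ref{defqmt} (and against Figure \ref{fig3e}): for a root or an $m$-th child the chain length is $s=1$ with $p=1$, giving $\frac{a}{a+b}$, and for the $\frac12$ sitting as an $(m-2)$-nd child of a vertex of value $p$ one gets $s=1$ with the $p$-factor active, giving $\frac{1}{1+2p}$; both are right, and your three-way case split at $\frac12$ and $\frac13$, together with the ``good vertex'' invariant, does close the strong induction on the denominator (each reduction --- to $\frac{b-a}{a}$, to $\frac{b-a}{2a}$, or to $\frac{a}{b-a}$ --- strictly decreases it, goodness is supplied by the inductive hypothesis exactly where formula (1) is invoked, and the value $\frac13$ correctly falls into the gadget case where $p=1$ is the root). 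The paper instead argues non-inductively: it handles $\frac1b$ via the $m$-th branch of the root, and for $a>1$ it starts at a vertex $\frac1x$ on that branch, passes to its $(m-2)$-nd child $\frac12$, walks $t$ steps along the $(m-1)$-st branch (whose labels are consecutive Fibonacci ratios), and takes the $m$-th child, which evaluates to $\frac{1}{1+f_{t+1}/x}$; it then solves for $t$ and $x$ using the classical fact that every positive integer divides some Fibonacci number. The paper's construction is shorter and exhibits explicit paths, but it leans on that external number-theoretic fact and uses the full $s>1$ form of the $m$-th-child rule; your argument is more elementary and self-contained, only ever needs the $s=1$ instances of that rule, and yields an effective recursive search, at the price of the case analysis and the positional bookkeeping.
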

\begin{proof}
We will show that all rational numbers $\frac{a}{b}$, where $0<\frac{a}{b}\leq 1$, belong to $\mathcal{T}_m$ when $m \geq 3$ and $q=1$  First note that all fractions of the form $\frac{1}{b}$ belong to $\mathcal{T}_m$, upon considering the $m$-th branch of the root.

So suppose $0<\frac{a}{b}<1$ is a rational number (in lowest terms) such that $a>1$.  Let $\frac{1}{x}$ be an element of the $m$-th branch of the root, where $x>1$ is to be determined.  Let $v=\frac{1}{2}$ be the $(m-2)$-nd child of $\frac{1}{x}$.  Consider the $(m-1)$-st branch of $v$, the sequence of which we will denote by $v_1=v,v_2,v_3,\ldots$.  Let $v_i=\frac{a_i}{b_i}$ in reduced form for $i \geq 1$.  It can be shown by induction that $a_i=f_i$ and $b_i=f_{i+1}$, where $f_i$ denotes the Fibonacci sequence defined by $f_i=f_{i-1}+f_{i-2}$ for $i\geq 2$ with $f_0=f_1=1$.  If $t\geq 1$, then $\prod_{i=1}^t\frac{b_i}{a_i} = f_{t+1}$, which implies that the $m$-th child of $v_t$ is $\frac{1}{1+\frac{f_{t+1}}{x}}$, by the definitions.

Recall the well known fact (see, e.g., \cite[p. 73-74]{Le}) that given any positive integer $j$, there exists some $k$ such that $j$ divides $f_k$.  Choosing $t$ so that $b-a$ divides $f_{t+1}$, i.e., $f_{t+1}=u(b-a)$ for some $u \geq 1$, and then letting $x=ua$, implies that the $m$-th child of $v_t$ is given by
$$\frac{1}{1+\frac{f_{t+1}}{x}}=\frac{1}{1+\frac{u(b-a)}{ua}}=\frac{a}{b}.$$
Thus, we have $\frac{a}{b} \in \mathcal{T}_m$, which completes the proof.
\end{proof}

\subsection{Case $c=0$}

A comparable tree may be constructed in the case when $c=0$.

\begin{definition}\label{defqm0}
The $(q,0)$-Calkin-Wilf tree of order $m$ is an $m$-ary tree with root $\frac{1}{1}$. A vertex labeled $\frac{a}{b}$ is a parent to $m$ children defined, from left to right, as follows.  Each of the first $m-2$ children is $\frac{1}{1+q}$, with the $(m-1)$-st child given by $\frac{a}{b+qa}$.  To define the $m$-th child, suppose $\frac{a_j}{b_j}$ is the $m$-th child of $\frac{a_{j+1}}{b_{j+1}}$ for $j=1,2,\ldots,s-1$, where $\frac{a_1}{b_1}=\frac{a}{b}$ and $s\geq 1$ is maximal.  Then the $m$-th child of $\frac{a}{b}$ is given by $$\frac{b}{qb+\frac{a}{1+q+\cdots+q^{s-2}+q^{s-1}r}},$$ where $r=\frac{a_{s+1}}{b_{s+1}}$ if $\frac{a_s}{b_s}$ is the $(m-1)$-st child of $\frac{a_{s+1}}{b_{s+1}}$ and $r=1$ otherwise.
\end{definition}

One can describe an $m$-ary tree in analogy to the case $c=m-1$ above whose vertices are labeled by rational functions of the form $\frac{g_{m,0}(mn+m-1;q)}{g_{m,0}(mn+m;q)}$.

\begin{theorem}
Let $m\geq2$ and let the concatenation of successive levels of the $(q,0)$-Calkin-Wilf tree of order $m$ form a sequence $\{\ell_m(n;q)\}_{n\geq0}$. Then
$$\ell_m(n;q)=\frac{g_{m,0}(mn+m-1;q)}{g_{m,0}(mn+m;q)},$$
for all $n\geq0$.
\end{theorem}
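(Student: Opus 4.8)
The plan is to prove the identity by induction on $n$, exactly paralleling the structure of the $c=m-1$ case already established. The base case $n=0$ reduces to checking that the root $\frac{1}{1}$ equals $\frac{g_{m,0}(m-1;q)}{g_{m,0}(m;q)}$; both numerator and denominator should evaluate to $1$ via the recurrence \eqref{eqff}, since $g_{m,0}(m-1;q)=g_{m,0}(1;q)=g_{m,0}(0;q)=1$ and $g_{m,0}(m;q)=g_{m,0}(1;q)+qg_{m,0}(0;q)$ needs to be reconciled with the convention; I would verify the precise small-index values first before proceeding.

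For the inductive step, I would assume the formula holds for $\ell_m(0;q),\ldots,\ell_m(n;q)$ and show it holds for the $m$ children $\ell_m(mn+1;q),\ldots,\ell_m(mn+m;q)$. The first $m-2$ children and the $(m-1)$-st child should be the routine cases. For $j\in[m-2]$, I would compute $\frac{g_{m,0}(m(mn+j)+m-1;q)}{g_{m,0}(m(mn+j)+m;q)}$ and use \eqref{eqff} to collapse the indices; since $m(mn+j)+m-1$ and $m(mn+j)+m = m(mn+j+1)$ fall into the "non-$c$" and "$c=0$" branches of the recurrence respectively, this should reduce to $\frac{1}{1+q}$, matching the definition. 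The $(m-1)$-st child computation should produce $\frac{a}{b+qa}$ in terms of $\ell_m(n;q)$, again by a direct application of \eqref{eqff} together with the induction hypothesis.

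The main obstacle, as in the $c=m-1$ proof, will be the $m$-th child. Here the structure is genuinely different: the recursion in Definition \ref{defqm0} chases a chain $\frac{a_j}{b_j}$ along $m$-th children (rather than $(m-1)$-st children), and the continued-fraction expression $\frac{b}{qb+\frac{a}{1+q+\cdots+q^{s-1}r}}$ is more intricate than the product form appearing earlier. I would set up the chain $n_1=n$, $n_j = mn_{j+1}+\text{(appropriate offset)}$ with $s$ maximal, and telescope the ratio $\frac{g_{m,0}(n_1;q)}{g_{m,0}(n_1+1;q)}$ down the chain using \eqref{eqff} repeatedly, keeping careful track of how each step contributes a factor to the denominator. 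The key is to show that the telescoped expression assembles into the nested $1+q+\cdots+q^{s-1}r$ form, with the terminal value $r$ determined by the type of child that $\frac{a_s}{b_s}$ is of its parent. I anticipate that the accounting of which case ($(m-1)$-st child versus otherwise) fixes the base value $r$ will be the delicate point, and I would handle it by case analysis on the residue of $n_s$ modulo $m$, mirroring the case split on $j\in[m-3]$, $j=m-2$, $j=m$, and $n_s=0$ used previously.
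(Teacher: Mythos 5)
Your plan follows the paper's proof essentially step for step: induction on $n$, the routine reduction of the first $m-2$ children and of the $(m-1)$-st child via \eqref{eqff}, and for the $m$-th child the same telescoping chain $n_1=n$, $n_j=mn_{j+1}+m$ with $s$ maximal, which assembles the denominator $1+q+\cdots+q^{s-2}+q^{s-1}r$ and fixes $r$ by a case analysis on whether $n_s=mn'+j$ with $j\in[m-2]$ (giving $r=1$) or $j=m-1$ (giving $r=\ell_m(n';q)$). The one substantive remark is that your hesitation about the base case is warranted: the recurrence gives $g_{m,0}(m;q)=g_{m,0}(1;q)+qg_{m,0}(0;q)=1+q$ rather than $1$, so the paper's own assertion that $\ell_m(0;q)=\frac{g_{m,0}(m-1;q)}{g_{m,0}(m;q)}=\frac{1}{1}$ merits exactly the verification of small-index values you propose before proceeding.
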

\begin{proof}
Let $\mathcal{T}_m$ be the $(q,0)$-Calkin-Wilf tree of order $m$.
We proceed by induction on $n$. Since $\ell_m(0,q)=\frac{g_{m,0}(m-1;q)}{g_{m,0}(m;q)}=\frac{1}{1}$, the claim holds for $n=0$.
Assume that the claim holds for $\ell_m(0;q),\ell_m(1;q),\ldots,\ell_m(n;q)$ of $\mathcal{T}_m$ and let us prove it for the children of $\ell_m(n;q)$, which are $\ell_m(mn+1;q),\ell_m(mn+2;q),\ldots,\ell_m(mn+m;q)$. By the induction hypothesis and Definition \ref{defqm0}, we have
\begin{align*}
\frac{g_{m,0}(m(mn+j)+m-1;q)}{g_{m,0}(m(mn+j)+m;q)}&=\frac{g_{m,0}(mn+j;q)}{g_{m,0}(mn+j+1;q)+qg_{m,0}(mn+j;q)}\\
&=\frac{g_{m,0}(n;q)}{g_{m,0}(n;q)+qg_{m,0}(n;q)}\\
&=\frac{1}{1+q}=\ell_m(mn+j;q),
\end{align*}
for all $j\in[m-2]$. Thus, the claim holds for $\ell_m(mn+j;q)$ when $j\in[m-2]$.
We also have
\begin{align*}
\frac{g_{m,0}(m(mn+m-1)+m-1;q)}{g_{m,0}(m(mn+m-1)+m;q)}&=\frac{g_{m,0}(mn+m-1;q)}{g_{m,0}(mn+m;q)+qg_{m,0}(mn+m-1;q)}\\
&=\frac{1}{q+\frac{g_{m,0}(mn+m;q)}{g_{m,0}(mn+m-1;q)}}=\frac{1}{q+\frac{1}{\ell_{m}(n;q)}}\\
&=\ell_m(mn+m-1;q),
\end{align*}
which implies that the claim holds for $\ell_m(mn+m-1;q)$. Thus, it remains to show that  $\frac{g_{m,0}(m(mn+m)+m-1;q)}{g_{m,0}(m(mn+m)+m;q)}=\ell_m(mn+m;q)$.
Let $n_1=n$ and $n_j=mn_{j+1}+m$ for $j=1,2,\ldots,s-1$, with $s$ maximal. Thus, $n_s=mn'+j$ for some $j\in[m-1]$ or $n_s=0$. By the definitions, we have
\begin{align*}
\frac{g_{m,0}(m(mn+m)+m-1;q)}{g_{m,0}(m(mn+m)+m;q)}&=\frac{g_{m,0}(mn+m;q)}{g_{m,0}(mn+m+1;q)+qg_{m,0}(mn+m;q)}\\
&=\frac{1}{q+\frac{g_{m,0}(mn+m+1;q)}{g_{m,0}(mn+m;q)}}\\
&=\frac{1}{q+\ell_m(n;q)\frac{g_{m,0}(n+1;q)}{g_{m,0}(n;q)}},
\end{align*}
where
\begin{align*}
\frac{g_{m,0}(n_1+1;q)}{g_{m,0}(n_1;q)}&=\frac{g_{m,0}(mn_2+m+1;q)}{g_{m,0}(mn_2+m;q)}=\frac{g_{m,0}(n_2+1;q)}{g_{m,0}(n_2+1;q)+qg_{m,0}(n_2;q)}\\
&=\frac{1}{1+q\frac{g_{m,0}(n_2;q)}{g_{m,0}(n_2+1;q)}}=\frac{1}{1+q\frac{g_{m,0}(mn_3+m;q)}{g_{m,0}(mn_3+m+1;q)}}\\
&=\frac{1}{1+q\frac{g_{m,0}(n_3+1;q)+qg_{m,0}(n_3;q)}{g_{m,0}(n_3+1;q)}}=\frac{1}{1+q+q^2\frac{g_{m,0}(n_3;q)}{g_{m,0}(n_3+1;q)}}\\
&=\cdots\\
&=\frac{1}{1+q+\cdots+q^{s-2}+q^{s-1}\frac{g_{m,0}(n_s;q)}{g_{m,0}(n_s+1;q)}}\\
&=\frac{1}{1+q+\cdots+q^{s-2}+q^{s-1}\frac{g_{m,0}(mn'+j;q)}{g_{m,0}(mn'+j+1;q)}}.
\end{align*}
Note that when $j\in[m-2]$, we have $\frac{g_{m,0}(mn'+j;q)}{g_{m,0}(mn'+j+1;q)}=1$, and when $j=m-1$, we have
$\frac{g_{m,0}(mn'+j;q)}{g_{m,0}(mn'+j+1;q)}=\ell_m(n';q)$.  It follows from the definitions that
\begin{align*}
\frac{g_{m,0}(m(mn+m)+m-1;q)}{g_{m,0}(m(mn+m)+m;q)}&=\ell_m(mn+m;q),
\end{align*}
which completes the induction.
\end{proof}

We have the following result when $q=1$ concerning the tree $\mathcal{T}_m$.

\begin{theorem}
When $q=1$, each positive rational number less than or equal one appears at least once in the $(q,0)$-Calkin-Wilf tree of order $m$ for all $m\geq 2$.
\end{theorem}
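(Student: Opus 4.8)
The plan is to fix $q=1$ throughout and prove the slightly stronger statement that \emph{every} reduced fraction $\frac{a}{b}$ with $0<\frac{a}{b}\le1$ occurs in $\mathcal{T}_m$, arguing by strong induction on the denominator $b$. The base case $b=1$ is immediate since $\frac11$ is the root. For the inductive step I would split according to the size of $\frac ab$. If $\frac ab\le\frac12$ (so $b-a\ge a$), then $\frac{a}{b-a}$ is again reduced, lies in $(0,1]$, and has denominator $b-a<b$; by the induction hypothesis it already occurs, and by Definition \ref{defqm0} its $(m-1)$-st child at $q=1$ is $\frac{a}{(b-a)+a}=\frac ab$, so $\frac ab\in\mathcal{T}_m$. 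This disposes of all fractions in $(0,\frac12]$, in particular every $\frac1b$.

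The substance lies in the regime $\frac12<\frac ab<1$, i.e. $0<b-a<a$, and the key tool is the following branch computation, which I would establish first. Let $\frac cd$ be any vertex of $\mathcal{T}_m$, let $w_0=\frac{c}{c+d}$ be its $(m-1)$-st child, and let $w_0,w_1,w_2,\dots$ be the $m$-th branch of $w_0$. Then $w_i=\frac{c+id}{c+(i+1)d}$ for all $i\ge0$. I would prove this by induction on $i$ using Definition \ref{defqm0}: because $w_0$ is an $(m-1)$-st child, the maximal chain of $m$-th-child edges ending at $w_i$ has top $w_0$ and length $s=i+1$, so the history parameter is $r=\frac cd$; substituting the inductive form of $w_i$ gives $(s-1)+r=i+\frac cd=\frac{c+id}{d}$, and the $m$-th-child formula telescopes to
$$w_{i+1}=\frac{c+(i+1)d}{\,c+(i+1)d+\dfrac{c+id}{(c+id)/d}\,}=\frac{c+(i+1)d}{c+(i+2)d}.$$
A one-line coprimality check, $\gcd(c+id,d)=\gcd(c,d)$, shows each $w_i$ is already in lowest terms.

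With this in hand, to realize a target $\frac ab$ with $\frac12<\frac ab<1$, I would set $d=b-a$ and pick the representative $c_0\in\{1,\dots,b-a\}$ with $c_0\equiv a\pmod{b-a}$. Since $\gcd(a,b-a)=\gcd(a,b)=1$, the fraction $\frac{c_0}{b-a}$ is reduced, lies in $(0,1]$, and has denominator $b-a<b$; by the induction hypothesis it occurs in $\mathcal{T}_m$. Applying the branch computation to $\frac{c_0}{b-a}$ with $i=\frac{a-c_0}{b-a}\ge0$ yields $w_i=\frac{c_0+id}{c_0+(i+1)d}=\frac ab$, which completes the induction. (For instance $\frac58$ reduces to $\frac23$, which reduces to $\frac11$; unwinding produces $\frac58$ as the $i=1$ term of the $m$-th branch of the $(m-1)$-st child of $\frac23$.)

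I expect the main obstacle to be the branch computation itself. The delicate points are verifying that starting the $m$-th branch at an $(m-1)$-st child pins the parameter $r$ to $\frac cd$ at \emph{every} level of the chain—so that the recursion telescopes to the clean linear form $w_i=\frac{c+id}{c+(i+1)d}$—and simultaneously tracking reducedness, since it is precisely the coprimality $\gcd(c+id,d)=\gcd(c,d)=1$ that guarantees the denominator strictly drops from $b$ to $b-a$ at each recursive call and hence that the induction terminates. The remaining ingredients are a routine case split on whether $\frac ab$ is at most or greater than $\frac12$ together with a direct appeal to the induction hypothesis.
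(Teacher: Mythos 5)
Your proof is correct and follows essentially the same route as the paper: the same split at $\frac12$ with reduction via the $(m-1)$-st child, and the same key computation that the $m$-th branch emanating from an $(m-1)$-st child of $\frac{c}{d}$ consists of the fractions $\frac{c+id}{c+(i+1)d}$ (the paper's $\frac{iy-(i-1)x}{(i+1)y-ix}$ with $c=x$, $d=y-x$). Choosing $c_0\equiv a\pmod{b-a}$ is equivalent to the paper's choice of $j$ via $\frac{j}{j+1}<\frac ab\le\frac{j+1}{j+2}$, and inducting on $b$ rather than on $a+b$ is an immaterial difference.
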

\begin{proof}
We will show that all rational numbers $\frac{a}{b}$ in lowest terms, where $0 < \frac{a}{b} \leq 1$, belong to $\mathcal{T}_m$ when $q=1$ by inducting on the sum $s=a+b$, the case $s=2$ clear.  First suppose $0 < \frac{a}{b}\leq \frac{1}{2}$.  Then $\frac{a}{b-a} \in \mathcal{T}_m$, by hypothesis, and has $(m-1)$-st child $\frac{a}{b}$, which implies $\frac{a}{b} \in \mathcal{T}_m$.

So assume $\frac{1}{2}<\frac{a}{b}<1$.  We will construct a vertex whose label is $\frac{a}{b}$.  To do so, let $v$ be a vertex of $\mathcal{T}_m$ labeled by $\frac{x}{y}$, where $0<\frac{x}{y}\leq\frac{1}{2}$ is in lowest terms and $x$ and $y$ are to be determined.  Note that $v$ can be taken to be an $(m-1)$-st child of a vertex labeled by $\frac{x}{y-x}$.  Consider the sequence $v=v_0,v_1,v_2,\ldots$ of vertices of $\mathcal{T}_m$ such that $v_i$ is the $m$-th child of $v_{i-1}$ for $i \geq 1$.  Using the definitions, one can show by induction that the vertex $v_i$ is labeled by $\frac{iy-(i-1)x}{(i+1)y-ix}$ for all $i \geq 0$.  Note that $x$ and $y$ relatively prime implies that each of these fractions is in lowest terms.

Suppose now that $j\geq 1$ is determined by the condition $\frac{j}{j+1}<\frac{a}{b}\leq \frac{j+1}{j+2}$.  Setting
$\frac{a}{b}=\frac{jy-(j-1)x}{(j+1)y-jx}$ implies $x=(j+1)a-jb$ and $y=ja-(j-1)b$.  Note that $a$ and $b$ relatively prime implies $x$ and $y$ are.  Furthermore, using the restrictions on $\frac{a}{b}$, one can show that $0<\frac{x}{y}\leq \frac{1}{2}$, as required.  Finally, note that $x+y=(2j+1)a-(2j-1)b<a+b$, which implies $\frac{x}{y} \in \mathcal{T}_m$, by hypothesis.  Thus, taking the $m$-th child exactly $j$ times starting with any vertex labeled by $\frac{x}{y}$ implies $\frac{a}{b} \in \mathcal{T}_m$, which completes the induction.
\end{proof}

\subsection{Case $1\leq c\leq m-2$}

The remaining cases when $1 \leq c \leq m-2$ may be described in terms of a single tree.

\begin{definition}\label{defqmtc}
Given $m \geq 3$ and $1 \leq c \leq m-2$, the $(q,c)$-Calkin-Wilf tree of order $m$ is an $m$-ary tree with root $\frac{1}{1}$. Each vertex labeled $\frac{a}{b}$ is a parent to $m$ children defined, from left to right, as follows: the $k$-th child for $k\neq c,c+1$ is $\frac{1}{1+q}$, the $c$-th child is $\frac{1}{1+q\frac{a}{b}}$, and the $(c+1)$-st child is $\frac{1}{1+q\frac{b}{a}}$.
\end{definition}

\begin{theorem}
Let $m\geq3$ and $1\leq c\leq m-2$. Suppose that the concatenation of successive levels of the $(q,c)$-Calkin-Wilf tree of order $m$ forms a sequence $\{\ell_{m,c}(n;q)\}_{n\geq0}$. Then
$$\ell_{m,c}(n;q)=\frac{g_{m,c}(mn+c-1;q)}{g_{m,c}(mn+c;q)},$$
for all $n\geq0$.
\end{theorem}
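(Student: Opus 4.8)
The plan is to prove the identity by induction on $n$, in direct parallel with the proofs already given for the cases $c=m-1$ and $c=0$. For the base case $n=0$, I would observe that $c-1\neq c$ and $0\leq c-1$, so \eqref{eqff} gives $g_{m,c}(c-1;q)=g_{m,c}(0;q)=1$, while $g_{m,c}(c;q)=g_{m,c}(0;q)+qg_{m,c}(-1;q)=1$; hence $\ell_{m,c}(0;q)=\frac{1}{1}$, the label of the root. For the inductive step I would assume the formula for $\ell_{m,c}(0;q),\ldots,\ell_{m,c}(n;q)$ and verify it for the $m$ children of $\ell_{m,c}(n;q)$, which are $\ell_{m,c}(mn+1;q),\ldots,\ell_{m,c}(mn+m;q)$.

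The engine of the computation is the observation that, for each $k\in[m]$, the numerator index $m(mn+k)+c-1$ has residue $c-1\neq c$ and the denominator index $m(mn+k)+c$ has residue $c$, so \eqref{eqff} yields
\begin{align*}
\ell_{m,c}(mn+k;q)
&=\frac{g_{m,c}(mn+k;q)}{g_{m,c}(mn+k;q)+qg_{m,c}(mn+k-1;q)}\\
&=\frac{1}{1+q\,\dfrac{g_{m,c}(mn+k-1;q)}{g_{m,c}(mn+k;q)}}.
\end{align*}
I would then split on $k$, writing $\frac{a}{b}=\ell_{m,c}(n;q)$. When $k=c$, the inner ratio equals $\ell_{m,c}(n;q)=\frac{a}{b}$ by the induction hypothesis, giving the prescribed $c$-th child $\frac{1}{1+q\frac{a}{b}}$. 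When $k=c+1$, the key point is that both $c+1\leq m-1$ and $c-1\geq 0$ have residues different from $c$, so $g_{m,c}(mn+c+1;q)=g_{m,c}(n;q)=g_{m,c}(mn+c-1;q)$; the inner ratio therefore becomes $\frac{g_{m,c}(mn+c;q)}{g_{m,c}(mn+c-1;q)}=\frac{b}{a}$, matching the $(c+1)$-st child $\frac{1}{1+q\frac{b}{a}}$. When $k\in[m-1]\setminus\{c,c+1\}$, both $mn+k$ and $mn+k-1$ reduce under \eqref{eqff} to $g_{m,c}(n;q)$, the inner ratio is $1$, and the child is $\frac{1}{1+q}$, as required.

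The main obstacle is the case $k=m$. Here $mn+m=m(n+1)$ has residue $0$, so the reduction above gives
\begin{align*}
\ell_{m,c}(mn+m;q)=\frac{g_{m,c}(n+1;q)}{g_{m,c}(n+1;q)+qg_{m,c}(n;q)}=\frac{1}{1+q\,g_{m,c}(n;q)/g_{m,c}(n+1;q)},
\end{align*}
and the inner ratio $g_{m,c}(n;q)/g_{m,c}(n+1;q)$ is \emph{not} equal to $1$ in general (already for $m=3,c=1$ one has $g_{3,1}(3;q)/g_{3,1}(4;q)=1/(1+q)\neq 1$). Thus the $m$-th child is genuinely distinct from the other non-special children and is not simply $\frac{1}{1+q}$; as in the cases $c=m-1$ and $c=0$, it must be controlled by a recursive ``look-back'' expressing $g_{m,c}(n;q)/g_{m,c}(n+1;q)$ through the labels along the maximal chain of successive $m$-th children terminating at the present vertex. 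I would therefore set $n_1=n$, $n_j=mn_{j+1}+m$ with $s$ maximal, and telescope the continued-fraction expansion of $g_{m,c}(n+1;q)/g_{m,c}(n;q)$ down this chain until reaching an index of special residue, exactly as in the earlier proofs. Carrying out this telescoping, and reconciling the resulting value with the assignment made to the $m$-th child in Definition~\ref{defqmtc}, is where essentially all of the difficulty lies, and it is the step I would scrutinize most carefully.
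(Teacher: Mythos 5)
Your base case and your treatment of the children $k\in[m-1]$ are correct and agree with the paper's proof. The obstacle you isolate at $k=m$ is not a defect of your write-up: it is a genuine error in the paper. The paper's proof lumps $j=m$ together with the other $j\neq c,c+1$ and asserts $g_{m,c}(mn+j;q)=g_{m,c}(n;q)=g_{m,c}(mn+j-1;q)$; but since $mn+m=m(n+1)$ has residue $0\neq c$, recurrence \eqref{eqff} gives $g_{m,c}(mn+m;q)=g_{m,c}(n+1;q)$, so the $m$-th child in the ratio sequence is $\bigl(1+q\,g_{m,c}(n;q)/g_{m,c}(n+1;q)\bigr)^{-1}$, not $\frac{1}{1+q}$. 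Your numerical check is decisive and worth recording explicitly: for $m=3$, $c=1$ one computes $g_{3,1}(36;q)=1+q$ and $g_{3,1}(37;q)=1+2q$, so the claimed value of $\ell_{3,1}(12;q)$ is $\frac{1+q}{1+2q}$, whereas Definition~\ref{defqmtc} forces the third child of every vertex to be $\frac{1}{1+q}$. Hence the theorem as stated is inconsistent with Definition~\ref{defqmtc}, and no telescoping argument can reconcile them; the definition of the $m$-th child must be amended with a look-back rule in the spirit of Definitions~\ref{defqmt} and~\ref{defqm0} before the statement can be proved.

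Two smaller points on your sketch of the repair. First, the chain you propose, $n_1=n$ and $n_j=mn_{j+1}+m$, is borrowed from the $c=0$ case and is not the relevant one here: the quantity to be unwound is $g_{m,c}(n;q)/g_{m,c}(n+1;q)$, and writing $n=mn'+(m-1)$ gives $g_{m,c}(n;q)/g_{m,c}(n+1;q)=g_{m,c}(n';q)/g_{m,c}(n'+1;q)$, so the descent is through residue $m-1$ (equivalently, through $(m-1)$-st children), terminating when the residue $r$ of the final index satisfies $r\neq m-1$, at which point the ratio equals $1$, $\ell_{m,c}(n';q)$, or $1/\ell_{m,c}(n';q)$ according as $r\notin\{c-1,c\}$, $r=c-1$, or $r=c$. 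Second, since you leave this step unexecuted, your proposal is incomplete as a proof; but given that the statement being proved is false relative to the printed definition, the incompleteness is the correct outcome, and your diagnosis of where the paper's argument breaks is exactly right.
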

\begin{proof}
Let $\mathcal{T}_{m,c}$ be the $(q,c)$-Calkin-Wilf tree of order $m$.
We proceed by induction on $n$, the $n=0$ case clear.  We again prove the claim for the children of $\ell_{m,c}(n;q)$. By the induction hypothesis and Definition \ref{defqmtc}, we have
\begin{align*}
\frac{g_{m,c}(m(mn+j)+c-1;q)}{g_{m,c}(m(mn+j)+c;q)}&=\frac{g_{m,c}(mn+j;q)}{g_{m,c}(mn+j;q)+qg_{m,c}(mn+j-1;q)}\\
&=\frac{g_{m,c}(n;q)}{g_{m,c}(n;q)+qg_{m,c}(n;q)}\\
&=\frac{1}{1+q}=\ell_m(mn+j;q),
\end{align*}
for all $j\in [m]$ and $j\neq c,c+1$.
When $j=c$, we have
\begin{align*}
\frac{g_{m,c}(m(mn+c)+c-1;q)}{g_{m,c}(m(mn+c)+c;q)}&=\frac{g_{m,c}(mn+c;q)}{g_{m,c}(mn+c;q)+qg_{m,c}(mn+c-1;q)}\\
&=\frac{1}{1+q\frac{g_{m,c}(mn+c-1;q)}{g_{m,c}(mn+c;q)}}\\
&=\frac{1}{1+q\ell_{m,c}(n;q)}\\
&=\ell_{m,c}(mn+c;q),
\end{align*}
which implies that the claim holds for $\ell_{m,c}(mn+c;q)$. Finally, when $j=c+1$, we have
\begin{align*}
\frac{g_{m,c}(m(mn+c+1)+c-1;q)}{g_{m,c}(m(mn+c+1)+c;q)}&=\frac{g_{m,c}(mn+c+1;q)}{g_{m,c}(mn+c+1;q)+qg_{m,c}(mn+c;q)}\\
&=\frac{1}{1+q\frac{g_{m,c}(mn+c;q)}{g_{m,c}(mn+c+1;q)}}\\
&=\frac{1}{1+q\frac{g_{m,c}(mn+c;q)}{g_{m,c}(mn+c-1;q)}}\\
&=\frac{1}{1+q\frac{1}{\ell_{m,c}(n;q)}}\\
&=\ell_{m,c}(mn+c+1;q),
\end{align*}
which implies that the claim holds for $\ell_{m,c}(mn+c+1;q)$ and completes the induction.
\end{proof}

\emph{Remark:}  When $q=1$, each positive rational number less than or equal $1$ appears at least once in $\mathcal{T}_{m,c}$ as a fraction in lowest terms for all $m \geq 3$ and $1 \leq c \leq m-2$.  This follows from the definitions, upon inducting on the sum $a+b$ corresponding to a vertex labeled by the fraction $\frac{a}{b}$ in lowest terms.  Indeed, each rational in the interval $(0,1)$ is seen to occur infinitely many times in the tree since it essentially starts over  each time a vertex is labeled by $\frac{1}{2}$.


\end{document}